\newtheorem{theorem}{Theorem}
\newtheorem{lemma}[theorem]{Lemma}
\theoremstyle{definition}
\theoremstyle{remark}
\def\det{\mathop\mathrm{det}\nolimits}
\def\H{\mathbb{H}}
\def\n{\nabla}
\newcommand{\s}{\mbox{$\Sigma$}}
\newcommand{\R}{\mbox{${\mathbb R}$}}
\newcommand{\N}{\mbox{$\mathbb{H}^2\times\R_1$}}
\newcommand{\fle}{\mbox{$\rightarrow$}}
\newcommand{\rf}[1]{\mbox{(\ref{#1})}}
\newcommand{\rl}[1]{{~\ref{#1}}}
\newcommand{\fs}{\mbox{$\mathcal{C}^\infty(\s)$}}
\def\beq{\begin{equation}}
\def\eeq{\end{equation}}
\begin{document}

\title[A Hilbert theorem for spacelike surfaces in $\mathbb{H}^2\times\R_1$]
{A Hilbert-type theorem for spacelike surfaces with constant Gaussian
curvature in $\mathbb{H}^2\times\R_1$}

\author{Alma L. Albujer}
\address{Departamento de Estadística e Investigación Operativa, Universidad de Alicante, 03080 Alicante, Spain}
\email{albujer@um.es}

\author{Luis J. Al\'\i as}
\address{Departamento de Matem\'{a}ticas, Universidad de Murcia, E-30100 Espinardo, Murcia, Spain}
\email{ljalias@um.es}

\thanks{The authors are partially supported by MEC project MTM2007-64504, and Fundaci\'on S\'eneca project 04540/GERM/06, Spain. This
research is a result of the activity developed within the framework of the Programme in Support of Excellence Groups of the Regi\'on de Murcia, Spain, by Fundaci\'on S\'eneca, Regional Agency for Science and Technology (Regional Plan
for Science and Technology 2007-2010).}

\subjclass[2000]{53C42, 53C50}

\date{May 2009}



\begin{abstract}
There are examples of complete spacelike surfaces in the Lorentzian product $\H^2\times\R_1$ with constant Gaussian curvature $K\leq -1$. In this paper, we show that there exists no complete spacelike surface
in $\H^2\times\R_1$ with constant Gaussian curvature $K>-1$.
\end{abstract}

\maketitle

\section{Introduction}

In 1900 Liebmann \cite{Lie} characterized the spheres as the unique complete surfaces with constant positive Gaussian curvature in $\mathbb{R}^3$. One year later, in 1901 Hilbert \cite{Hil} showed that it does not exist any complete
surface with constant negative Gaussian curvature in $\mathbb{R}^3$. Finally, every complete surface with zero Gaussian curvature in $\mathbb{R}^3$ must be a straight cylinder over a complete, planar and simple curve, as was proved
independently by Hartman and Nirenberg in 1958 \cite{HaNi}, Stoker in 1961 \cite{Sto} and Massey in 1962 \cite{Mas}. The Liebmann and Hilbert theorems are easily extended to complete surfaces in $\mathbb{S}^3$ and $\mathbb{H}^3$,
since their proofs depend basically on the Codazzi equation, which is the same in any space form. In 2007 Aledo, Espinar and Gálvez \cite{AEG} extended the Liebmann and Hilbert theorems to the case of complete surfaces with constant
Gaussian curvature in the Riemannian homogeneous product spaces $\mathbb{S}^2\times\R$ and $\mathbb{H}^2\times\R$. Specifically, they showed that the only complete surfaces with constant Gaussian curvature $K >0$ in
$\mathbb{H}^2\times\R$ (resp. $K > 1$ in $\mathbb{S}^2\times\R$) are rotational surfaces. In addition, they proved the non existence of complete surfaces with constant Gaussian curvature $K < -1$ in $\mathbb{H}^2\times\R$ and
$\mathbb{S}^2\times\R$.

Recently, in \cite{AAA} the authors jointly with Aledo complemented the results in \cite{AEG} by showing that the slices are the only compact two-sided surfaces in $\mathbb{S}^2\times\R$ whose angle function does not change sign and
have constant Gaussian curvature. Moreover, a similar result is valid for spacelike complete surfaces in the Lorentzian product space $\mathbb{S}^2\times\R_1$: the only complete spacelike surfaces in the Lorentzian product
$\mathbb{S}^2\times\R_1$ with constant Gaussian curvature are the slices \cite[Corollary 9]{AAA}. However, in the proof of these results we use as a main tool the compactness of $\mathbb{S}^2$, so it can not be extended to surfaces in
the Lorentzian product $\mathbb{H}^2\times\R_1$. Actually, slices $\H^2\times\{t_0\},\,t_0\in\R$, are trivial examples of complete spacelike surfaces in $\mathbb{H}^2\times\R_1$ with constant Gaussian curvature $K=-1$. On the other
hand, in \cite[Example 12]{AAA} we have recently given an example of a non trivial complete entire spacelike graph in $\mathbb{H}^2\times\R_1$ with constant Gaussian curvature $K$ for every value of $K$
such that $K<-1$. Therefore, it seems a natural question to study the existence or non existence of complete spacelike
surfaces in $\H^2\times\R_1$ with constant Gaussian curvature $K >-1$. In this context, the following
non existence result is proved:

\begin{theorem}\label{thmain}
There exists no complete spacelike surface in $\mathbb{H}^2\times\R_1$ with constant Gaussian curvature $K>-1$.
\end{theorem}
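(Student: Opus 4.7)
My plan is to combine the Gauss equation with the Omori--Yau maximum principle applied to the height function. Let $N$ be a future-pointing unit timelike normal to $\Sigma$, $A$ the associated shape operator, $\Theta=\langle N,\partial_t\rangle\le-1$ the angle function, and $T=\partial_t^{\top}$ its tangential projection, so $|T|^2=\Theta^2-1$. A direct calculation of the ambient sectional curvature along $T_p\Sigma$, using the product structure of $\mathbb{H}^2\times\mathbb{R}_1$ and the parallelism of $\partial_t$, gives $\overline K(T\Sigma)=-\Theta^2$; the Gauss equation for a spacelike surface with $\langle N,N\rangle=-1$ then yields
\[
\det A \;=\; -(\Theta^{2}+K).
\]
Under the hypothesis $K>-1$ this is uniformly bounded above by $-(1+K)<0$ on all of $\Sigma$; in particular $A$ has two real principal curvatures of opposite sign everywhere.

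Since $K$ is constant, $\operatorname{Ric}_\Sigma=Kg$ is uniformly bounded below, so on the complete surface $\Sigma$ the Omori--Yau generalized maximum principle holds, including in its Hessian form. Using the parallelism of $\partial_t$ one computes $\operatorname{Hess}\,t=-\Theta\,h$, where $h(X,Y)=\langle AX,Y\rangle$. Suppose first that $\sup_\Sigma t<\infty$ (the case $\inf_\Sigma t>-\infty$ is entirely symmetric, via $-t$). Applying the Hessian Omori--Yau principle to $t$ produces a sequence $(p_k)\subset\Sigma$ along which $t(p_k)\to\sup t$, $|T|(p_k)\to 0$ --- hence $\Theta(p_k)\to-1$ --- and $\operatorname{Hess}\,t(p_k)\le(1/k)\,g$. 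Dividing by $-\Theta(p_k)>0$ gives $h(p_k)\le\varepsilon_k\,g$ with $\varepsilon_k\to 0$, so each principal curvature satisfies $\limsup_k \lambda_i(p_k)\le 0$. On the other hand $\lambda_1(p_k)\lambda_2(p_k)=\det A(p_k)\to -(1+K)<0$, which is incompatible with both eigenvalues having nonpositive limits, provided $|A|$ stays bounded along the sequence. This yields the desired contradiction.

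Two further steps are required: excluding the degenerate scenario in which one principal curvature diverges to $-\infty$ along the Omori--Yau sequence, and handling the case where $t$ is unbounded in both directions. For the first, one uses the constancy of $K$ --- which via $\det A=-(\Theta^{2}+K)$ yields $\nabla\det A=-2\Theta\,AT$ --- together with the Codazzi-type identity $\operatorname{div}A=2\nabla H-\Theta\,T$ specific to the product ambient; these identities together allow control of the mean curvature $H$ (and hence of $|A|^{2}=4H^{2}+2(\Theta^{2}+K)$) along the sequence. For the second, one reduces to the previous case either via a bounded modification of $t$ (for instance $\arctan t$) or, more satisfactorily, by first establishing that a complete spacelike surface in $\mathbb{H}^2\times\mathbb{R}_1$ is necessarily an entire graph over $\mathbb{H}^2$ and then running the same maximum-principle argument on the base. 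I expect the main obstacle to lie in this control of $|A|$: the Hessian Omori--Yau principle yields only a one-sided bound on the principal curvatures, and an a~priori estimate combining the Codazzi identity with the constancy of $K$ is needed to close the argument.
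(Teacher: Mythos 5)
Your reduction via the Gauss equation is correct, but the two difficulties you flag at the end are not technicalities to be deferred: they are genuine gaps, and as the argument stands neither is closed. First, the degenerate scenario. The Hessian form of Omori--Yau gives only a one-sided bound on the principal curvatures ($\limsup_k\lambda_i(p_k)\leq 0$ in your convention), while $\lambda_1(p_k)\lambda_2(p_k)=\det A(p_k)\to-(1+K)<0$. These two facts are perfectly compatible: since the product is negative, say $\lambda_1>0>\lambda_2$, the one-sided bound forces $\lambda_1(p_k)\to 0^+$ and hence $\lambda_2(p_k)\to-\infty$, and there is no contradiction without an a priori bound on $|A|$ along the sequence. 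The identities you invoke ($\nabla\det A=-2\Theta AT$ and the divergence form of Codazzi) are first-order differential relations; they do not produce a pointwise bound on $H$ at the points $p_k$, and no maximum principle is being applied to $H$ or $|A|^2$ that would convert them into one. Second, the unbounded case is not a side issue: the height function of a complete spacelike surface in $\mathbb{H}^2\times\R_1$ has no reason to be bounded on either side (the known complete examples with $K\leq-1$ are entire graphs with unbounded height). The $\arctan t$ substitution fails because $\mathrm{Hess}(\arctan t)(p_k)\leq(1/k)g$ only yields $\Theta\alpha(p_k)\leq\frac{1+t(p_k)^2}{k}\,g+\cdots$ with $t(p_k)\to\infty$, and because $|\nabla(\arctan t)|(p_k)\to 0$ no longer forces $\Theta(p_k)\to-1$, which was the very step that pinned $\det A(p_k)$ to $-(1+K)$. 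Reducing to an entire graph over $\mathbb{H}^2$ (which is indeed always the case) does not help, since the graph function is the same unbounded $t$.

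The paper circumvents both problems at once. For $K>0$ it uses Bonnet--Myers together with the fact that the projection onto $\mathbb{H}^2$ is a global diffeomorphism, so no compact spacelike surface exists. For $-1<K\leq 0$ it deforms the induced metric to $\tilde{g}=g+\frac{1}{K+1}\,dh^2$, which is automatically complete, shows that $(\tilde{g},\alpha)$ is a Codazzi pair with constant negative extrinsic curvature $-(K+1)$ and that the Gaussian curvature of $\tilde{g}$ satisfies $\sup\tilde{K}\leq K-1<0$, and then invokes Wissler's theorem on complete Codazzi pairs. That route requires no control of $|A|$ and no boundedness of $h$. If you want to salvage a maximum-principle proof, you would need to apply Omori--Yau to a quantity such as $\Theta$ or a combination involving $|A|^2$, whose Hessian genuinely controls the full second fundamental form; that is a substantially harder computation than the one you have set up.
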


The proof of Theorem\rl{thmain} for $K>0$ is a consequence of the Bonnet-Myers theorem taking into account that there is no compact surface in $\mathbb{H}^2\times\R_1$ (see Section\rl{proof}). On the other hand, in the case $-1<K\leq
0$ the proof follows the ideas introduced in \cite[Theorem 3]{AEG} and it is based on two geometric tools: the abstract theory of Codazzi pairs and the construction of a new complete metric on the surface obtained when we deform the
induced metric in the direction of the height function. However, in difference with the proof of \cite[Theorem 3]{AEG}, our proof of Theorem\rl{thmain} only requires tensorial computations.

In Section\rl{prel} we introduce the necessary notions about spacelike surfaces in $\mathbb{H}^2\times\R_1$ as well as the notion of a Codazzi pair and the theorem of Wissler, which is fundamental in the proof of Theorem\rl{thmain}. The
complete proof of Theorem\rl{thmain} is given in Section\rl{proof}. Finally, in the Appendix we compare the geometry of a spacelike surface in $\mathbb{H}^2\times\R_1$ with the geometry of the same surface endowed with the Riemannian
metric obtained by deformation of the induced metric by a fixed function.

\textit{Note added in proof.} After submission of this paper, we were informed by Gálvez, Jiménez and Mira that our
Theorem\rl{thmain} can be seen also as an application of their general correspondence results between isometric immersions
in \cite{GJM} and the non existence result of complete surfaces with constant Gaussian curvature $K<-1$ in the Riemannian
product $\mathbb{H}^2\times\R$.

\section{Preliminaries}
\label{prel}
\subsection{Spacelike surfaces in $\mathbb{H}^2\times\R_1$}\text\

Let $(\H^2,g_{\H^2})$ be the hyperbolic plane, and let us consider the product manifold $\H^2 \times \R$ endowed with the Lorentzian metric
\[
g=\pi_{\H}^*(g_{\H^2})-\pi_{\mathbb{R}}^*(dt^2),
\]
where $\pi_{\H}$ and $\pi_{\mathbb{R}}$ denote the projections from $\H^2 \times\R$ onto each factor. For simplicity, we will write
$$
g=g_{\H^2}-dt^2,
$$
and we will denote by \N\ the 3-dimensional product manifold $\H^2\times\R$ endowed with that Lorentzian metric.

A smooth immersion $f:\s\fle\N$ of a connected surface $\s^2$ is said to be a spacelike surface if $f$ induces a Riemannian metric on $\s$, which as usual is also denoted by $g$. It is interesting to remark that in that case, since
\[
\partial_t=(\partial / \partial_t)_{(x,t)}, \quad x \in \H^2, t \in \R,
\]
is a unitary timelike vector field globally defined on the ambient spacetime \N, there exists a unique unitary timelike normal field $N$ globally defined on \s\ which is in the same time-orientation as $\partial_t$. That is,
\[
g(N,\partial_t) \leq -1 < 0 \quad \mathrm{on} \quad \s.
\]
We will refer to $N$ as the future-pointing Gauss map of \s, and we will denote by $\Theta:\s \fle \left( -\infty,-1 \right]$ the smooth function on \s\ given by $\Theta=g(N,\partial_t)$. The function $\Theta$ measures the hyperbolic
angle $\theta$ between the future-pointing vector fields $N$ and $\partial_t$ along \s. Indeed, they are related by $\cosh \theta=-\Theta$.

In order to fix notation, let $\bar{\n}$ and $\n$ denote the Levi-Civita connections in \N\ and \s, respectively. Then the Gauss and Weingarten formulae for the spacelike surface $f:\s\fle\N$ are given by
\beq
\label{gaussfor}
\bar{\n}_XY=\n_XY-g(AX,Y)N
\eeq
and
\beq
\label{wein}
AX=-\bar{\n}_XN,
\eeq
for any tangent vector fields $X,Y \in T\s$. Here $A:T\s \fle T\s$ stands for the shape operator (or second fundamental form) of \s\ with respect to its
future-pointing Gauss map $N$. As is well known, the Gaussian curvature $K$ of the surface \s\ is described in terms of $A$ and the curvature of the ambient spacetime by the Gauss equation, which is given by
\beq
\label{eqgauss}
K=\bar{K}-\det A,
\eeq
where $\bar{K}$ denotes the sectional curvature in \N\ of the plane tangent to \s. It is not difficult to see that the Gauss equation \eqref{eqgauss} can be written as
\beq
\label{eqGauss}
K=-\Theta^2-\det A.
\eeq

On the other hand, let $\bar{R}$ denote the curvature tensor of \N. The Codazzi equation of the spacelike surface \s\ describes the tangent component of $\bar{R}(X,Y)N$, for any tangent vector fields $X,Y \in T\s$, in terms of the
derivative of the shape operator and it is given by
\beq
\label{codazzi}
(\bar{R}(X,Y)N)^\top=(\n_XA)Y-(\n_YA)X,
\eeq
where $\n_XA$ denotes the covariant derivative of $A$, that is,
\[
(\n_XA)Y=\n_X(AY)-A(\n_XY).
\]
From now on, if $Z$ is a vector field along the immersion $f:\s\fle\N$, then $Z^\top \in T\s$ stands for the tangential component of $Z$ along \s, that is, $Z=Z^\top-g(N,Z)N$. It can be seen that, as the hyperbolic plane is a complete
surface of constant Gaussian curvature $-1$, $\bar{R}$ can be simplified and the Codazzi equation \eqref{codazzi} becomes
\beq
\label{Codazzi}
(\n_XA)Y=(\n_YA)X -\Theta (g(X,\partial_t^\top)Y-g(Y,\partial_t^\top)X),
\eeq
(for the
details on the above computations see, for instance, \cite{A,AA}).

Given a spacelike surface $f:\s\fle\N$, the height function of \s, denoted by $h$, is defined as the projection of \s\ onto $\R$, that is, $h \in \fs$ is the smooth function given by $h=\pi_{\mathbb{R}} \circ f$. Observe that the
gradient of $\pi_\mathbb{R}$ on \N\ is
\[
\bar{\n}\pi_\mathbb{R}=-g(\bar{\n}\pi_\mathbb{R},\partial_t)\partial_t.
\]
Therefore, the gradient of $h$ on \s\ is
\[
\n h=(\bar{\n}\pi_\mathbb{R})^\top=-\partial_t^\top.
\]
Since $\partial_t^\top=\partial_t+\Theta N$, we easily get
\beq
\label{normgradh}
\| \n h \|^2=\Theta^2-1,
\eeq
where $\| \cdot \|$ denotes the norm of a vector field on \s. Since $\partial_t$ is parallel on \N\ we have that
\beq
\label{aux}
\bar{\n}_X\partial_t=0
\eeq
for any tangent vector field $X \in T\s$. Writing $\partial_t=-\n h - \Theta N$ along the surface \s\ and using Gauss \eqref{gaussfor} and Weingarten \eqref{wein} formulae, we easily get from
\eqref{aux} that
\beq
\label{aux2}
\n_X \n h=\Theta AX
\eeq
for every $X \in T\s$.

\subsection{Codazzi pairs}\text\

An important geometrical tool for the proof of our result is the abstract theory of Codazzi pairs following \cite{Mil}. Let $(A,B)$ be a pair of real quadratic forms on a $2$-dimensional surface \s\ such that $A$ is a Riemannian
metric. Associated to this pair it is possible to define its extrinsic curvature in an abstract way as the quotient
\beq
\label{extcurv}
K(A,B)=\frac{\det B}{\det A}.
\eeq
On the other hand, since $A$ is a Riemannian metric, it has
associated a Levi-Civita connection $\n^A$, a Riemann curvature tensor $R_A$ defined, as usual, by
\[
R_A(X,Y)Z=\n^A_{[X,Y]}Z-[\n^A_X,\n^A_Y]Z
\]
for any $X,Y,Z \in T\s$ and the
corresponding Gaussian curvature
\beq
\label{codintcurv}
K_A=\frac{A(R_A(X,Y)X,Y)}{Q_A(X,Y)},
\eeq
being
$Q_A(X,Y)=A(X,X)A(Y,Y)-A(X,Y)^2$ for any $X,Y\in T\s$.

The pair $(A,B)$ is said to be a Codazzi pair if it satisfies the
Codazzi equation of a space form, that is,
\beq
\label{codazzipair}
(\n^A_XS)(Y)-(\n^A_YS)(X)=0
\eeq
for every
$X,Y\in T\s$,  $S:T\s\fle T\s$ being the endomorphism in $T\s$
$A$-metrically equivalent to $B$, that is
\[
B(X,Y)=A(SX,Y),
\]
and $(\n^A_XS)$ the covariant derivative of $S$,
\[
(\n^A_XS)(Y)=\n^A_X(SY)-S(\n^A_XY).
\]
The following result, due to Wissler, will be
fundamental in the proof of our result:
\begin{theorem}[\cite{Wei}, \cite{Wis}]
\label{thwissler} Let $(A,B)$ be a Codazzi pair with constant
negative extrinsic curvature $K(A,B)$. Then, if $A$ is complete
$\inf_\Sigma |K_A|=0$.
\end{theorem}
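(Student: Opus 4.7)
The plan is to argue by contradiction, assuming $|K_A|\geq\delta>0$ throughout $\Sigma$. Since $K_A$ is continuous and nowhere zero on a connected surface, its sign is constant, so I split into the two cases $K_A\geq\delta$ and $K_A\leq-\delta$.

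The positive case is disposed of by a topological observation. If $K_A\geq\delta>0$ on the complete manifold $(\Sigma,A)$, then Bonnet--Myers forces $\Sigma$ to be closed and Gauss--Bonnet gives $\chi(\Sigma)>0$. On the other hand, from $K(A,B)<0$, the $A$-self-adjoint endomorphism $S$ defined by $B(X,Y)=A(SX,Y)$ has $\det S<0$ at every point, so $S$ has two nonzero real eigenvalues of opposite signs; consequently $B$ possesses two everywhere-transverse null line fields. A closed surface with positive Euler characteristic admits no such pair of line fields, which is the desired contradiction.

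The negative case is the substantive one and mimics the classical proof of Hilbert's theorem in this abstract Codazzi-pair setting. After passing to the universal cover $\widetilde\Sigma$ (on which the lifted pair is still Codazzi with the same constant $K(A,B)<0$, with $A$ still complete and $K_A\leq-\delta$) and rescaling $B$ so that $K(A,B)=-1$, the two asymptotic line fields become orientable and can be parametrized by $A$-unit-speed vector fields. The Codazzi hypothesis is then precisely what ensures that flowing along these fields assembles into a globally defined coordinate map $\psi:\mathbb{R}^2\to\widetilde\Sigma$ of Chebyshev type with $\psi^*A=du^2+2\cos\omega\,du\,dv+dv^2$ for some smooth $\omega:\mathbb{R}^2\to(0,\pi)$; completeness of $A$ is what allows the flows to be defined for all parameter values and the resulting net to cover $\widetilde\Sigma$. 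A direct computation then yields the curvature $K_A=-\omega_{uv}/\sin\omega$ and area element $\sin\omega\,du\,dv$, so on every rectangle $[0,a]\times[0,b]$
\[
\int_0^a\!\!\int_0^b K_A\sin\omega\,du\,dv = \omega(0,0)+\omega(a,b)-\omega(a,0)-\omega(0,b)\in(-2\pi,2\pi).
\]
Combined with $K_A\leq-\delta$, this bounds the total $A$-area of $\widetilde\Sigma$ by $2\pi/\delta$. However, completeness together with $K_A\leq-\delta$ implies, via Cartan--Hadamard and Rauch comparison, that the $A$-area of geodesic balls in $\widetilde\Sigma$ grows at least like $2\pi(\cosh(\sqrt{\delta}\,r)-1)/\delta$, so $\widetilde\Sigma$ has infinite area, a contradiction.

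The main obstacle is the construction and global validity of the asymptotic Chebyshev parametrization: one must verify that the two smooth transverse asymptotic line fields can be integrated, that unit-speed parametrizations of their integral curves fit into a single coordinate map defined on all of $\mathbb{R}^2$, and that the Codazzi equation is exactly the translation-net condition forcing the Chebyshev form of $\psi^*A$. Everything else, including the rectangle identity and the area-growth estimate, follows by standard arguments once this parametrization is in hand.
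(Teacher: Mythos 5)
First, a point of comparison: the paper does not prove this statement at all. Theorem\rl{thwissler} is imported from Wissler \cite{Wis} (see also \cite{Wei}) and used as a black box, so the only meaningful benchmark is the proof in those references, which is precisely the Hilbert-type argument via asymptotic Chebyshev nets that you outline. Your architecture is correct: the sign split is legitimate (a nowhere-vanishing continuous $K_A$ on a connected surface has constant sign); the positive case is correctly disposed of by Bonnet--Myers, Gauss--Bonnet and the nonexistence of a line field on a closed surface with $\chi\neq 0$ (the null line fields of the indefinite form $B$ exist exactly because $\det S<0$, and, as you implicitly use, this part needs no Codazzi hypothesis); and in the negative case the Hazzidakis rectangle identity together with the comparison lower bound on area growth does produce the contradiction \emph{once the global Chebyshev parametrization is available}. (There is a harmless sign slip: integrating $K_A\sin\omega=-\omega_{uv}$ over the rectangle gives the negative of the alternating sum you wrote, but since $\omega$ takes values in $(0,\pi)$ the bound by $2\pi$ in absolute value survives either way.)

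The genuine gap is the one you flag yourself: the existence and global validity of the asymptotic net $\psi:\mathbb{R}^2\to\widetilde\Sigma$ with $\psi^*A=du^2+2\cos\omega\,du\,dv+dv^2$ and $\omega\in(0,\pi)$ is asserted, not established, and essentially all of the content of Wissler's theorem is concentrated there. Concretely you must (i) show that the two null line fields of $B$ integrate to $A$-unit-speed flows defined for all parameter values (this is where completeness of $A$ enters); (ii) prove that the resulting net is Chebyshev, i.e.\ that opposite sides of coordinate quadrilaterals have equal $A$-length --- this is the step that actually uses the Codazzi equation, and it is a computation, not a tautology: ``the Codazzi hypothesis is precisely what ensures\dots'' is the statement of the key lemma, not its proof; (iii) show that $\psi$ is well defined on all of $\mathbb{R}^2$ and surjective onto $\widetilde\Sigma$; and (iv) show that the angle $\omega$ never reaches $0$ or $\pi$, so that $\psi$ is an immersion and the area element is $\sin\omega\,du\,dv>0$. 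In the classical Hilbert theorem each of these points requires real work, and in the abstract Codazzi-pair setting the same work must be redone with $B$ in place of the second fundamental form. Until (i)--(iv) are carried out, what you have is an accurate roadmap of the known proof rather than a proof.
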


\section{Proof of Theorem\rl{thmain}}
\label{proof} Let us recall first that any complete spacelike surface $f:\s\rightarrow\mathbb{H}^2\times\R_1$ is necessarily diffeomorphic to $\mathbb{H}^2$. Actually, it is not difficult to see that $\Pi=\pi_M\circ
f:\s\rightarrow\mathbb{H}^2$ satisfies $\Pi^\ast(g_{\mathbb{H}^2})\geq g$. Therefore, $\Pi$ is a local diffeomorphism which increases the distance between the Riemannian surfaces $(\s,g)$ and $(\mathbb{H}^2,g_{\mathbb{H}^2})$. The
completeness of \s\ implies that $\Pi$ is a covering map \cite[Chapter VIII, Lemma 8.1]{KoNo}. Moreover, since $\mathbb{H}^2$ is simply connected, $\Pi$ is a global diffeomorphism. As a direct consequence of it, there exists no
compact spacelike surface in $\mathbb{H}^2\times\R_1$. On the other hand, from the Bonnet-Myers theorem any Riemannian surface with positive constant Gaussian curvature is necessarily compact. Consequently, there exists no complete
spacelike surface in $\mathbb{H}^2\times\R_1$ with positive constant Gaussian curvature.

Let us assume now that $f:\s\fle\mathbb{H}^2\times\R_1$ is a complete spacelike surface with constant Gaussian curvature $-1<K\leq0$, and let us consider the Riemannian metric on \s\ defined by
\beq
\label{gtilde}
\tilde{g}=g+c\,dh^2\geq g,
\eeq
where $c$ is the positive constant $c=\frac{1}{K+1}>0$. Since $g$ is a complete metric by assumption and $\tilde{g}\geq g$, $\tilde{g}$ is also a complete metric on \s.

Let $\alpha:T\s\times T\s \fle\mathbb{R}$ denote the second fundamental form of the surface $f:\s\fle\mathbb{H}^2\times\mathbb{R}_1$, that is, $\alpha(X,Y)=g(AX,Y)$.

\vspace*{0.2cm}

\noindent {\bf Claim.} \hspace*{-0.4cm} {\it We assert that $(\tilde{g},\alpha)$ is a Codazzi pair with constant negative extrinsic curvature
\[
K(\tilde{g},\alpha)=-(K+1)<0.
\]}

To prove this claim, observe first that the endomorphism
$\tilde{A}:T\s \fle T\s$ which is $\tilde{g}$-metrically
equivalent to $\alpha$ can be written in terms of $A$. In fact for
any $X,Y \in T\s$ it holds
\[
g(AX,Y)=\alpha(X,Y)=\tilde{g}(\tilde{A}X,Y),
\]
and from \rf{gtilde}
\[
g(AX,Y)=\tilde{g}(AX,Y)-cAX(h)Y(h).
\]
Therefore we get \beq \label{auxatildea} \tilde{A}X=AX-c g(AX,\n
h)\tilde{\n}h,\eeq for any $X\in T\s$. On the other hand, by the
definition of the gradient of a function, and by the expression
\rf{gtilde} for the metric $\tilde{g}$, it yields
\[
X(h)=\tilde{g}(\tilde{\n}h, X)=g(\n h,X)=\tilde{g}(\n h, X)-c\|\n h\|^2\tilde{g}(\tilde{\n}h,X),
\]
for any $X\in T\s$. Then,
\[
\tilde{\n} h=\frac{1}{1+c\|\n h\|^2}\n h,
\]
so \rf{auxatildea} becomes \beq \label{atilde}
\tilde{A}X=AX-\frac{c}{1+c\|\n h\|^2}g(AX,\n h)\n h. \eeq

It is also possible to express the Levi-Civita connection of the
metric $\tilde{g}$, $\tilde{\n}$, in terms of the differential
operators related to the metric $g$, obtaining the relation \beq
\label{lctilde} \tilde{\n}_XY=\n_XY+\frac{c}{1+c\|\n h\|^2}\n^2
h(X,Y) \n h \eeq for any $X,Y\in T\s$, $\n^2$ being the Hessian
operator of the surface $f:\s\fle\mathbb{H}^2\times\mathbb{R}_1$,
(see the Appendix for the details).

From \rf{lctilde} and \rf{atilde} we get with a straightforward
computation that \beq \label{auxcp2}
\begin{split}
(\tilde{\n}_Y\tilde{A})X=&\tilde{\n}_Y(\tilde{A}X)-\tilde{A}(\tilde{\n}_XY)\\
=&(\n_YA)X-\frac{c}{1+c\|\n h\|^2}g((\n_YA)X,\n h)\n h\\
&-\frac{c}{1+c\|\n h\|^2}g(AX,\n h)\n_Y\n h+T(X,Y),
\end{split}
\eeq where $T$ is the symmetric $(0,2)$ tensor on \s\ given by
\[\begin{split}
T(X,Y)=&\frac{c^2}{(1+c\|\n h\|^2)^2}\Theta\left(g(AY,\n h)g(AX,\n h)+g(AX,Y)g(A(\n h),\n h)\right)\n h\\
&-\frac{c}{1+c\|\n h\|^2}\n^2 h(X,Y)A(\n h).
\end{split}
\]
Using the Codazzi equation \rf{Codazzi}, we observe that
\[
g((\n_YA)X-(\n_XA)Y,\n h)=0.
\]
Therefore, using again the Codazzi equation \rf{Codazzi} and the
expression \rf{aux2}, we obtain from \rf{auxcp2} that \beq
\label{auxcp}
\begin{split}
(\tilde{\n}_Y\tilde{A})X-(\tilde{\n}_X\tilde{A})Y=&\Theta\left(g(Y,\n
h)X-g(X,\n h)Y\right)\\&-\Theta\frac{c}{1+c\|\n
h\|^2}\left(g(AX,\n h)AY-g(AY,\n h)AX\right).
\end{split} \eeq To check that the left hand side of \rf{auxcp} vanishes, it is enough
to proof that it vanishes when we consider as vector fields
$\{E_1,E_2\}$ a local $g$-orthonormal frame of $T\s$ which
diagonalizes the shape operator. It is worth pointing out that
such a frame does not always exist; problems can occur when the
multiplicity of the principal curvatures changes and also at the
points where the principal curvatures are not differentiable.
However, we can consider the open dense subset of \s, $\s'$,
consisting of points at which the number of distinct principal
curvatures is locally constant. Then, for every $p\in \s'$ there
exists a local $g$-orthonormal frame defined on a neighbourhood of
$p$ that diagonalizes $A$, that is, $\{E_1,E_2\}$ such that
$AE_1=\lambda_1E_1$ and $AE_2=\lambda_2E_2$ with each $\lambda_i$
smooth, see, for instance, \cite[Paragraph 16.10]{Bes}. We will
work on $\s'$, and the conclusion will be valid in all the surface
\s\ by a continuity argument. Considering these vector fields,
\rf{auxcp} becomes \[
(\tilde{\n}_{E_2}\tilde{A})E_1-(\tilde{\n}_{E_1}\tilde{A})E_2=\Theta\left(1+\lambda_1\lambda_2\frac{c}{1+c\|\n
h\|^2}\right)\left(g(E_2,\n h)E_1-g(E_1,\n h)E_2\right),\] which
vanishes, since using the Gauss equation \rf{eqGauss} and the
relation \rf{normgradh} we get
\[
\lambda_1\lambda_2\frac{c}{1+c\|\n
h\|^2}=-(K+\Theta^2)\frac{\frac{1}{K+1}}{1+\frac{1}{K+1}\|\n
h\|^2}=-\frac{K+\Theta^2}{K+1+\|\n h\|^2}=-1.
\]

It remains to compute the extrinsic curvature of the Codazzi pair
$(\tilde{g},\alpha)$. Let $\{E_1,E_2\}$ be a local $g$-orthonormal
frame of $T\s$, then
\[
\tilde{g}(E_i,E_i)=1+cg(E_i,\n h)^2 \quad \mathrm{and} \quad
\tilde{g}(E_1,E_2)=cg(E_1,\n h)g(E_2,\n h).
\]
Therefore, we have
\[
\det \tilde{g}=(1+cg(E_1,\n h)^2)(1+cg(E_2,\n h)^2)-c^2g(E_1,\n
h)^2g(E_2,\n h)^2=1+c\|\n h\|^2,
\]
so using the equations \rf{eqGauss} and \rf{normgradh}, the
extrinsic curvature of $(\tilde{g},\alpha)$ is given by
\[
K(\tilde{g},\alpha)=\frac{\det\alpha}{\det\tilde{g}}=\frac{\det
A}{1+c\|\n h\|^2}=\frac{-(K+1)(K+\Theta^2)}{K+1+\|\n
h\|^2}=-(K+1)<0.
\]
This finishes the proof of our Claim.

Consider now $\s''\subset\s$ the subset in \s\ where the height
function $h$ is non constant. $\s''$ is an open dense subset of
\s, since in other case it would exist an open subset
$\Omega\subset\s$ where $h|_\Omega$  is constant. Then, from
expressions \rf{normgradh} and \rf{aux2} $\Theta|_\Omega=-1$ and
$A|_\Omega=0$. Therefore, from the Gauss equation \rf{eqGauss} it
would be $K=-1$, which contradicts our assumption. By Lemma 3 in
the Appendix, the Gaussian curvature of the surface
$(\s,\tilde{g})$, $\tilde{K}$, can be written in terms of the
Gaussian curvature of the surface $(\s,g)$ as \beq \label{aux2k}
\tilde{K}=\frac{K(1+c\|\n h\|^2)+c\det \n^2 h}{(1+c\|\n h\|^2)^2}
\eeq in $\s''$. And by continuity \rf{aux2k} holds in \s. Observe
that from the expressions \rf{aux2} and \rf{aux} and from the
Gauss equation \rf{eqGauss} we get \beq \label{det} \det{\n^2
h}=\Theta^2\det{A}=-\Theta^2(K+\Theta^2)=-(1+\|\n h\|^2)(K+1+\|\n
h\|^2). \eeq Therefore, \rf{aux2k} becomes \beq
\label{tildekfinal} \tilde{K}=\frac{(1-c)K-c(1+\|\n
h\|^2)^2}{(1+c\|\n h\|^2)^2}. \eeq If we consider in
\rf{tildekfinal} $\tilde{K}$ as a function of $\|\n h\|^2$, then
$\tilde{K}$ is a monotonous decreasing function. Therefore,
evaluating it at $0$ and using that $c=1/(K+1)$ we have \beq
\label{inftildek} \inf \tilde{K}\leq \sup
\tilde{K}=(1-c)K-c=K-1<0. \eeq

Summing up, we have proven that $(\tilde{g},\alpha)$ is a Codazzi
pair with negative constant extrinsic curvature, $\tilde{g}$ being
a complete Riemannian metric with Gaussian curvature $\tilde{K}$
verifying \rf{inftildek}, which contradicts the theorem of
Wissler, Theorem\rl{thwissler}. Therefore, it can not exist any
complete spacelike surface $f:\s\fle\H^2\times\R_1$  with constant
Gaussian curvature $-1<K\leq 0$, as we were assuming, which
completes the proof of Theorem\rl{thmain}.

\section*{Appendix: Relating the geometry of $(\s,g)$ and $(\s,\tilde{g})$.}

Given a Riemannian surface $(\s,g)$, a non constant smooth
function $u\in\fs$ and a positive constant $c>0$, it makes sense
to consider the new Riemannian surface $(\s,\tilde{g})$, where
\beq \label{gtildeap} \tilde{g}=g+cdu^2\geq g. \eeq Therefore
$(\s,\tilde{g})$ is obtained by deformation of the metric $g$ in
the direction of the function $u$. Observe that in the particular
case where \s\ is a spacelike surface in $\mathbb{H}^2\times\R_1$
and $u$ is the height function of \s, the situation is the one
presented in Section\rl{proof}. Our aim in this appendix is to
obtain some relations between the geometry of $(\s,g)$ and
$(\s,\tilde{g})$, giving general versions of the expressions
\rf{lctilde} and \rf{tildekfinal}.

We begin by studying the relation between the Levi-Civita
connections of $(\s,\tilde{g})$, $\tilde{\n}$, and $(\s,g)$, $\n$.
Using the Koszul formula and the expression \rf{gtildeap} for
$\tilde{g}$ we have
\[
\begin{split}
2\tilde{g}(\tilde{\n}_XY,Z)=&X(\tilde{g}(Y,Z))+Y(\tilde{g}(Z,X))-Z(\tilde{g}(X,Y))\\
&-\tilde{g}(X,[Y,Z])+\tilde{g}(Y,[Z,X])+\tilde{g}(Z,[X,Y])\\
=&2g(\n_XY,Z)+c\left[X(Y(u)Z(u))+Y(Z(u)X(u))-Z(X(u)Y(u))\right.\\
&\hspace*{2.85cm}-X(u)(YZ-ZY)(u)+Y(u)(ZX-XZ)(u)\\
&\hspace*{2.85cm}\left.+Z(u)(XY-YX)(u)\right]\\
=&2g(\n_XY,Z)+2cX(Y(u))Z(u),
\end{split}
\] for any $X,Y,Z \in T\s$. On the other hand, from \rf{gtildeap} we get
\[
\tilde{g}(\tilde{\n}_XY,Z)=g(\tilde{\n}_XY,Z)+c\tilde{\n}_XY(u)Z(u),
\]
so we obtain \beq \label{lctildeaux}
\tilde{\n}_XY=\n_XY-c\left(\tilde{\n}_XY(u)-X(Y(u))\right)\n u
\eeq for any $X,Y \in T\s$. It follows from here that
\[
\tilde{\n}_XY(u)=\n_XY(u)-c\tilde{\n}_XY(u)\|\n u\|^2+cX(Y(u))\|\n
u\|^2.
\]
Therefore, we have \beq \label{lctildeaux2}
\tilde{\n}_XY(u)=\frac{1}{1+c\|\n u\|^2}\left(\n_XY(u)+c
X(Y(u))\|\n u\|^2\right). \eeq Finally, substituting
\rf{lctildeaux2} into \rf{lctildeaux} we get \[
\tilde{\n}_XY=\n_XY-\frac{c}{1+c\|\n
u\|^2}\left(\n_XY(u)-X(Y(u))\right)\n u\] for any $X,Y \in T\s$.
Or equivalently, \beq \label{lctildeap}
\tilde{\n}_XY=\n_XY+\frac{c}{1+c\|\n u\|^2}\n^2 u(X,Y) \n u, \eeq
$\n^2$ being the Hessian operator of the surface $(\s,g)$.

In the following lemma, we obtain the relation between the
Gaussian curvature $\tilde{K}$ of $(\s,\tilde{g})$ and the
Gaussian curvature $K$ of $(\s,g)$.
\begin{lemma} \label{lemmakktilde}
Let $(\s,g)$ be a Riemannian surface, $u\in\fs$ a non constant
smooth function and $c>0$ a positive constant. Then, the Gaussian
curvature $\tilde{K}$ of the Riemannian surface
$(\s,\tilde{g}=g+cdu^2)$ is given by
\[
\tilde{K}=\frac{K(1+c\|\n u\|^2)+c\det \n^2 u}{(1+c\|\n u\|^2)^2},
\]
where $K$, $\n$ and $\n^2$ denote the Gaussian curvature, the
gradient and the Hessian operator of $(\s,g)$, respectively.
\end{lemma}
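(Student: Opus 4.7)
The plan is to compute the Riemann tensor $\tilde{R}$ of $\tilde{g}$ directly from the relation
\[
\tilde{\nabla}_X Y = \nabla_X Y + \phi\,\nabla^2 u(X,Y)\,\nabla u, \qquad \phi := \frac{c}{1+c\|\nabla u\|^2},
\]
already obtained in \rf{lctildeap}, and then read off $\tilde{K}$ from the definition \rf{codintcurv}. The strategy is to work in a local frame adapted to $\nabla u$ that simultaneously $g$- and $\tilde{g}$-diagonalises the inner products; in such a frame almost every correction term will be manifestly $\tilde{g}$-orthogonal to the vector against which one pairs the curvature, and so can be discarded at sight.

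It suffices to prove the formula at a point where $\nabla u\neq 0$: on any open set where $\nabla u\equiv 0$ one has $\tilde{g}=g$, $\tilde{\nabla}=\nabla$ and $\nabla^2 u\equiv 0$, so the identity reduces to $\tilde{K}=K$, and the remaining boundary points are covered by continuity of both sides. At a point $p$ with $\nabla u(p)\neq 0$ I would choose a local $g$-orthonormal frame $\{E_1,E_2\}$ with $E_1=\nabla u/\|\nabla u\|$. Then $\nabla u = \|\nabla u\|E_1$, $E_2(u)=0$, and
\[
\tilde{g}(E_1,E_1) = 1+c\|\nabla u\|^2,\qquad \tilde{g}(E_2,E_2) = 1,\qquad \tilde{g}(E_1,E_2) = 0,
\]
so $\det(\tilde{g}|_{\{E_1,E_2\}}) = 1+c\|\nabla u\|^2$, and a tangent vector has the same $E_2$-component with respect to $g$ and to $\tilde{g}$.

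Expanding $\tilde{R}(E_1,E_2)E_1$ by means of \rf{lctildeap}, the corrections produced by the deformation fall into two groups: those proportional to $\nabla u$, equivalently to $E_1$, which vanish after pairing with $E_2$, and those coming from differentiating the explicit factor $\nabla u$ in the formula for $\tilde{\nabla}$. The latter group reduces to
\[
\phi\bigl[\nabla^2 u(E_1,E_1)\,\nabla_{E_2}\nabla u - \nabla^2 u(E_2,E_1)\,\nabla_{E_1}\nabla u\bigr];
\]
taking its $E_2$-component and using $g(\nabla_{E_i}\nabla u, E_j) = \nabla^2 u(E_i,E_j)$ together with the symmetry of the Hessian, this simplifies to $\phi\,\det\nabla^2 u$. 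Combining with the intrinsic contribution $g(R(E_1,E_2)E_1,E_2) = K$ gives $\tilde{g}(\tilde{R}(E_1,E_2)E_1,E_2) = K + \phi\,\det\nabla^2 u$, and dividing by $1 + c\|\nabla u\|^2$ yields the claimed formula. The main obstacle is simply the algebraic bookkeeping in expanding $\tilde{\nabla}_X\tilde{\nabla}_Y Z$; the adapted frame is chosen precisely so that essentially every correction term is manifestly $\tilde{g}$-orthogonal to $E_2$ and can be dropped without computation.
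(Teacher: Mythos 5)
Your proposal is correct and follows essentially the same route as the paper's proof: the same adapted $g$-orthonormal frame with $E_2\perp\nabla u$, the same expansion of $\tilde{R}(E_1,E_2)E_1$ via \rf{lctildeap} discarding all corrections proportional to $\nabla u$, the same identification of the surviving term as $\frac{c}{1+c\|\nabla u\|^2}\det\nabla^2 u$, and the same division by $\tilde{Q}(E_1,E_2)=1+c\|\nabla u\|^2$. Your explicit treatment of points where $\nabla u$ vanishes is a minor addition the paper leaves implicit.
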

\begin{proof}
Let $\{E_1,E_2\}$ be a local $g$-orthonormal frame on $T\s$ such
that $E_2\perp\n u$. Then, \beq \label{kk} K=g(R(E_1,E_2)E_1,E_2),
\eeq  and \beq \label{tildekk}
\tilde{K}=\frac{\tilde{g}(\tilde{R}(E_1,E_2)E_1,E_2)}{\tilde{Q}(E_1,E_2)},
\eeq where
$\tilde{Q}(E_1,E_2)=\tilde{g}(E_1,E_1)\tilde{g}(E_2,E_2)-\tilde{g}(E_1,E_2)^2=1+c\|\n
u\|^2$, and $R$ and $\tilde{R}$ stand for the Riemann curvature
tensors of $(\s,g)$ and $(\s,\tilde{g})$, respectively. Therefore
we need the relation between $\tilde{R}$ and $R$. Since
\[\tilde{R}(E_1,E_2)E_1=\tilde{\n}_{[E_1,E_2]}E_1-[\tilde{\n}_{E_1},\tilde{\n}_{E_2}]E_1,\]
we will study each term separately. From the expression
\rf{lctildeap}, we have \beq \label{k11}
\begin{split}
\tilde{\n}_{\tilde{\n}_{E_1}E_2}E_1=&\tilde{\n}_{\n_{E_1}E_2}E_1+\frac{c}{1+c\|\n u\|^2}\n^2u(E_1,E_2)\tilde{\n}_{\n u}E_1\\
=&\n_{\n_{E_1}E_2}E_1+\frac{c}{1+c\|\n u\|^2}\n^2u(E_1,E_2)\n_{\n
u}E_1+f_1\n u,
\end{split}
\eeq and \beq \label{k12}
\begin{split}
\tilde{\n}_{\tilde{\n}_{E_2}E_1}E_1=&\tilde{\n}_{\n_{E_2}E_1}E_1+\frac{c}{1+c\|\n u\|^2}\n^2u(E_1,E_2)\tilde{\n}_{\n u}E_1\\
=&\n_{\n_{E_2}E_1}E_1+\frac{c}{1+c\|\n u\|^2}\n^2u(E_1,E_2)\n_{\n
u}E_1+f_2\n u,
\end{split}
\eeq where $f_1,f_2\in \mathcal{C}^\infty(\s)$. Observe that, in
order to obtain $\tilde{K}$, we will have to compute the product
of the expressions above times $E_2$, which is by assumption
orthogonal to $\n u$. Therefore, all the terms that are
proportional to $\n u$ will vanish, and so we do not mind the
explicit expressions for $f_1$ and $f_2$. From \rf{k11} and
\rf{k12} we get \beq \label{k1} \tilde{\n}_{[E_1,E_2]}E_1 =
\n_{[E_1,E_2]}E_1+f_3\n u, \eeq being $f_3=f_1-f_2\in\fs$. On the
other hand,
\[
\begin{split}
\tilde{\n}_{E_1}\tilde{\n}_{E_2}E_1=&\tilde{\n}_{E_1}\n_{E_2}E_1+\frac{c}{1+c\|\n u\|^2}\n^2u(E_1,E_2)\tilde{\n}_{E_1}\n u+f_4\n u\\
=&\n_{E_1}\n_{E_2}E_1+\frac{c}{1+c\|\n
u\|^2}\n^2u(E_1,E_2)\n_{E_1}\n u+f_5\n u,
\end{split}
\]
and
\[
\begin{split}
\tilde{\n}_{E_2}\tilde{\n}_{E_1}E_1=&\tilde{\n}_{E_2}\n_{E_1}E_1+\frac{c}{1+c\|\n u\|^2}\n^2u(E_1,E_1)\tilde{\n}_{E_2}\n u+f_6\n u\\
=&\n_{E_2}\n_{E_1}E_1+\frac{c}{1+c\|\n
u\|^2}\n^2u(E_1,E_1)\n_{E_2}\n u+f_7\n u,
\end{split}
\]
where again $f_4,f_5,f_6,f_7\in\fs$. Therefore, \beq \label{k2}
\begin{split}
[\tilde{\n}_{E_1},\tilde{\n}_{E_2}]E_1=&[\n_{E_1},\n_{E_2}]E_1\\&+\frac{c}{1+c\|\n
u\|^2}\left(\n^2u(E_1,E_2)\n_{E_1}\n u-\n^2u(E_1,E_1)\n_{E_2}\n
u\right)+f_8\n u
\end{split}
\eeq being $f_8=f_5-f_7\in\fs$, which jointly with \rf{k1} yields
\[\begin{split}
\tilde{R}(E_1,E_2)E_1=&R(E_1,E_2)E_1\\&+\frac{c}{1+c\|\n
u\|^2}\left(\n^2u(E_1,E_1)\n_{E_2}\n u-\n^2u(E_1,E_2)\n_{E_1}\n
u\right)+f\n u
\end{split}
\]
being $f=f_3-f_8\in\fs$. Therefore, \beq \label{auxk}
\begin{split}
\tilde{g}(\tilde{R}(E_1,E_2)E_1,E_2)=&g(\tilde{R}(E_1,E_2)E_1,E_2)\\=&g(R(E_1,E_2)E_1,E_2)+\frac{c}{1+c\|\n
u\|^2}\det{\n^2 u}. \end{split} \eeq Or equivalently, from \rf{kk} and \rf{tildekk}
\[
\tilde{K}=\frac{K(1+c\|\n u\|^2)+c\det \n^2 u}{(1+c\|\n u\|^2)^2},
\]
which proves the result.
\end{proof}

\bibliographystyle{amsplain}

\end{document}